\newcommand{\shrinkmargins}[1]{
  \addtolength{\textheight}{#1\topmargin}
  \addtolength{\textheight}{#1\topmargin}
  \addtolength{\textwidth}{#1\oddsidemargin}
  \addtolength{\textwidth}{#1\evensidemargin}
  \addtolength{\topmargin}{-#1\topmargin}
  \addtolength{\oddsidemargin}{-#1\oddsidemargin}
  \addtolength{\evensidemargin}{-#1\evensidemargin}
  }
\newtheorem{theorem}{Theorem}
\newtheorem{lemma}[theorem]{Lemma}
\newtheorem{corollary}[theorem]{Corollary}
\newtheorem*{theorem*}{Theorem}
\theoremstyle{definition}
\theoremstyle{remark}
\newtheorem*{remark}{Remark}
\newtheorem*{remarks}{{\bf Remarks}}
\newtheorem*{example}{Example}
\numberwithin{theorem}{section} \numberwithin{equation}{section}
\newcommand{\stirl}[2]{{#1 \brack #2}}
\newcommand{\norm}[2][]{\ensuremath{\left|\!\left|#2\right|\!\right|_{#1}}}
\newcommand{\floor}[1]{\left\lfloor #1 \right\rfloor}
\begin{document}
\title[Central Limit Theorems]
{Variations of Central Limit Theorems\\ and Stirling numbers of the First Kind}
\author{Bernhard Heim }
\address{Lehrstuhl A f\"{u}r Mathematik, RWTH Aachen University, 52056 Aachen, Germany}
\email{bernhard.heim@rwth-aachen.de}
\author{Markus Neuhauser}
\address{Kutaisi International University, 5/7, Youth Avenue,  Kutaisi, 4600 Georgia}
\address{Lehrstuhl A f\"{u}r Mathematik, RWTH Aachen University, 52056 Aachen, Germany}
\email{markus.neuhauser@kiu.edu.ge}
\subjclass[2020] {Primary
05A16, 60F05;
%Central limit and other weak theorems
Secondary 05A16, %Asymptotic enumeration
11B39}
\keywords{Central Limit Theorem, Local Limit Theorem, Probabilistic Number Theory, Singularity Analysis.}
%%\linenumbers
\begin{abstract}
We construct a new parametrization of double sequences $\{A_{n,k}(s)\}_{n,k}$ between
$A_{n,k}(0)= \binom{n-1}{k-1}$ and $A_{n,k}(1)= \frac{1}{n!}\stirl{n}{k}
$, 
where $\stirl{n}{k}$ are the unsigned Stirling numbers of the first kind.
For each $s$ we prove a central limit theorem and a local limit theorem.
This extends the de\,Moivre--Laplace central limit theorem and 
Goncharov's result, that unsigned Stirling numbers of the first kind are asymptotically normal.
Herewith, we provide several applications.
\end{abstract}
%%%%%%%%%%%%%%%%%%%%%%%%%%%%%%%%%%%%%%%%%%%%%%%%%%%%%%%%%%%%%%%%%%%%%%%%%%%%%%%%%%%%%%%%%%%%%%%%%%%%%%%%%%
%%%%%%%%%%%%%%%%%%%%%%%%%%%%%%%%%%%%%%%%%%%%%%%%%%%%%%%%%%%%%%%%%%%%%%%%%%%%%%%%%%%%%%%%%%%%%%%%%%%%%%%%%%
%%%%%%%%%%%%%%%%%%%%%%%%%%%%%%%%%%%%%%%%%%%%%%%%%%%%%%%%%%%%%%%%%%%%%%%%%%%%%%%%%%%%%%%%%%%%%%%%%%%%%%%%%%
%%%%%%%%%%%%%%%%%%%%%%%%%%%%%%%%%%%%%%%%%%%%%%%%%%%%%%%%%%%%%%%%%%%%%%%%%%%%%%%%%%%%%%%%%%%%%%%%%%%%%%%%%%
\maketitle
%%\newpage
\section{Introduction}
The central limit theorem is one of the most remarkable theorems in science \cite{Fe45, Fe71, Fi11}. 
From the de\,Moivre--Laplace theorem in combinatorics and probability, involving binomial distributions,
the central limit theorem culminated in a
universal law of nature (\cite{KM18}, section $3$). In this paper,
we construct a parametrization of double sequences $\{A_{n,k}(s)\}_{n,k}$ between
$A_{n,k}(0)= \binom{n-1}{k-1}$ and $A_{n,k}(1)= \frac{1}{n!}\stirl{n}{k}
$, 
where $\stirl{n}{k}$ are the unsigned Stirling numbers of the first kind.
Finally, we apply Harper's method \cite{Ha67} and prove
a central limit theorem and a local limit theorem
for each $s$.

Let $Z_n \in \{0,1, \ldots, n\}$ denote a random variable with binomial distribution
\begin{equation*}
\mathbb{P}\left( Z_n = k\right) = \binom{n}{k} \, p^k \, q^{n-k},
\end{equation*}
where $0 < p <1$ and $q=1-p$. The central limit theorem
by de\,Moivre--Laplace states, that the normalized random
variables $Z_n$, converge in distribution 
against the standard normal distribution $
{N}\left( 0,1\right) $:
\begin{equation*}
\frac{Z_n - n\, p }{\sqrt{n \, p \, q}}
\stackrel{D}{\longrightarrow }
{N}(0,1).
\end{equation*}
In asymptotic analysis \cite{Be73, Ca15}, one is interested in the asymptotic normality of sequences.
Goncharov \cite{Go44,Go62} proved in 1944, that the unsigned Stirling numbers of the first kind
$\stirl{n}{k}$
have this property. 
More than $20$ years later, Harper \cite{Ha67} discovered a more conceptional method, proving also that the 
Stirling numbers of the second kind are asymptotically normal.
When
describing
Goncharov's proof,
Harper writes
``Goncharov \dots\ by brute force torturously manipulates the characteristic functions of the 
distributions until they approach
$\exp \left( -x^{2}/c\right) $, $c$ a positive constant."

Recently, Harper's method has been applied by Gawronski and Neuschel \cite{GN13}, investing in 
Euler--Frobenius numbers and by
Kahle and Stump \cite{KS19}. Note, that in several cases, also other 
non-gaussian distributions need to be considered (e.~g.\ limiting Betti distributions of Hilbert
schemes of $n$ points,
where the Gumbel distribution was the right limit distribution, 
Griffin et al.\ \cite{GORT22}).

Moreover, related to the topic, since we deal with unimodal sequences, we suggest the
analysis of properties of the modes.
We utilize a result of Darroch \cite{Da64, Be96} and study the modes of $\{A_{n,k}(s)\}$. This is
connected to Erd\H{o}s'
proof  \cite{Er53} of the Hammersley conjecture \cite{Ha51}, related to the peaks of $\{\stirl{n}{k}\}$.
For
other
sequences, we refer to Bringmann et al.\ \cite{BJSMR19}.
%%%%%%%%%%%%%%%%%%%%%%%%%%%%%%%%%%%%%%
%%%%%%%%%%%%%%%%%%%%%%%%%%%%%%%%%%%%%%
%%%%%%%%%%%%%%  Main Results
%%%%%%%%%%%%%%%%%%%%%%%%%%%%%%%%%%%%%%
%%%%%%%%%%%%%%%%%%%%%%%%%%%%%%%%%%%%%%

\section{Main results}
Let $h_s(n):= n^s$ for $s \in \mathbb{R}$. Let $P_n^{h_s}(x):= \frac{x}{h_s(n)} \, \sum_{k=0}^{n-1} P_k^{h_s}(x)$ with
initial value $P_0^{h_s}(x):=1$. We are interested in the coefficients of these polynomials:
\begin{equation*}
P_n^{h_s}(x) = \sum_{k=0}^n   A_{n,k}(s) \, x^k.
\end{equation*}
Let $n \geq 1$. Then $A_{n,n}\left( s
\right) = (n!)^{-s}$ and $A_{n,0}(s)=0$.
We deduce from \cite{HN21}, Example~1
and Example 3 for the values $s=0$ and $s=1$:
\begin{equation}\label{Raender}
A_{n,k}(0)= \binom{n-1}{k-1} \text{ and } A_{n,k}(1) = \frac{
1}{n!}\stirl{n}{k}.
\end{equation}
The unsigned Stirling numbers of the first kind $\stirl{
n}{
k}
$, denote the number of all permutations of a set of $n$ elements with
exactly
$k$ distinct cycles. We refer to B\'{o}na \cite{Bo12}.

We mainly focus on  $s \in [0,1]$ due to (\ref{Raender}).
Then, we
obtain central and local limit theorems for
the double sequence $\{A_{n,k}(s)\}_{n,k}$. 
The case $s \in [-1,0]$ can be reduced to the case $[0,1]$, since
\begin{equation*}
P_n^{h_s}(x) = h_{s}\left( n\right) \prod_{k=1}^{n
} h_s(k) \,\, x^{n+1} \, P_n^{h_{-s}}(x^{-1}).
\end{equation*}
%%%%%%%%%%%%%
%%%%%%%%%%%%%
%%%%%%%%%%%%%
%%%%%%%%%%%%% Central Limit Theorem
%%%%%%%%%%%%%
%%%%%%%%%%%%%
\subsection{Central
limit
theorem}
The classical central limit theorem of
de\,Moivre (1738) and Laplace (1812), was
developed from the results in
probability theory \cite{Fi11, KM18} to a general theorem, without direct reference to
concepts as random variable, expected value,
and variance. We refer to Feller \cite{Fe45} and Canfield \cite{Ca15} for excellent
surveys. 
The modern version of the central limit theorem can also be considered as a theorem on the
asymptotic normality of a sequence of non-negative numbers in singularity analysis \cite{Be73}.
In this spirit, we state our first result in the most general form.
\begin{theorem}\label{main:CLT} 
Suppose $s \in [0,1]$.  %%
Then
there exist real sequences $\{a_n(s)\}_n$ and $\{b_n(s) \}_n$ with $b_n(s)$ positive for almost all $n$, so
that
\begin{equation}\label{CLT}
\lim_{n \to \infty}  \sup _{x \in \mathbb{R}} \left|
{ \frac{1}{P_n^{h_s}(1)}
\sum_{ k \, \leq \, a_n(s)  + x \, b_n(s)} A_{n,k}(s) - \frac{1}{\sqrt{2 \pi}} \int_{-\infty}^x
\mathrm{e}^{-\frac{t^2}{2}} \, \mathrm{d}t} \right|=0.
\end{equation}
\end{theorem}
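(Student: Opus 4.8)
The plan is to run Harper's method \cite{Ha67}, and the first task is to convert the implicit recursion for $P_n^{h_s}$ into a closed product. Writing the definition as $n^s P_n^{h_s}(x) = x\sum_{k=0}^{n-1}P_k^{h_s}(x)$ and subtracting the corresponding identity with $n$ replaced by $n-1$, the sums telescope and yield, for $n\ge 2$, the first-order relation $n^s P_n^{h_s}(x) = \bigl(x+(n-1)^s\bigr)\,P_{n-1}^{h_s}(x)$, with $P_1^{h_s}(x)=x$. Iterating, I expect the factorization
\[
P_n^{h_s}(x) \;=\; \frac{x}{(n!)^s}\,\prod_{j=1}^{n-1}\bigl(x+j^s\bigr)\qquad(n\ge 1),
\]
which specializes to $x(1+x)^{n-1}$ at $s=0$ and to $\tfrac1{n!}x(x+1)\cdots(x+n-1)$ at $s=1$, consistently with \eqref{Raender}. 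The key consequence is that every zero of $P_n^{h_s}$ is real and $\le 0$.

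Next I would pass to probabilities. Normalizing gives $P_n^{h_s}(x)/P_n^{h_s}(1)=x\prod_{j=1}^{n-1}\frac{x+j^s}{1+j^s}$, a polynomial with non-negative coefficients summing to $1$; it is therefore the probability generating function of $S_n=1+\sum_{j=1}^{n-1}X_j$, with the $X_j$ independent and $\mathbb{P}(X_j=1)=(1+j^s)^{-1}$. Comparing coefficients, $\mathbb{P}(S_n=k)=A_{n,k}(s)/P_n^{h_s}(1)$, so with
\[
a_n(s):=\mathbb{E}[S_n]=1+\sum_{j=1}^{n-1}\frac1{1+j^s},\qquad
b_n(s):=\sqrt{\operatorname{Var}(S_n)}=\Bigl(\sum_{j=1}^{n-1}\frac{j^s}{(1+j^s)^2}\Bigr)^{1/2},
\]
the inner sum in \eqref{CLT} is exactly $\mathbb{P}\!\left(\bigl(S_n-a_n(s)\bigr)/b_n(s)\le x\right)$. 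Since $b_n(s)>0$ for every $n\ge 2$, this takes care of the ``positive for almost all $n$'' clause.

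It then remains to verify a Lyapunov (or Lindeberg) condition for the triangular array $\{X_j\}$, and the one thing to check carefully is that $b_n(s)^2\to\infty$: as $\tfrac{j^s}{(1+j^s)^2}\asymp j^{-s}$, one has $b_n(s)^2\asymp n^{1-s}$ for $s\in[0,1)$ (indeed $b_n(0)^2=(n-1)/4$) and $b_n(1)^2\asymp\log n$, all divergent --- this is precisely where the hypothesis $s\le 1$ enters. Because $|X_j-\mathbb{E}X_j|\le 1$, the third absolute central moments are dominated by the variances, so Lyapunov's ratio is at most $b_n(s)^{-3}\sum_{j}\operatorname{Var}(X_j)=b_n(s)^{-1}\to 0$, and Lyapunov's central limit theorem gives $\bigl(S_n-a_n(s)\bigr)/b_n(s)\stackrel{D}{\longrightarrow}N(0,1)$. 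Finally, since the limiting distribution function is continuous and the approximants are monotone, P\'olya's theorem upgrades this to convergence uniform in $x$, which is the assertion \eqref{CLT}.

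I do not expect a real obstacle here: the only genuinely creative step is spotting the product formula in the first paragraph, after which the proof is Harper's method together with the elementary variance bound above. The most delicate point is the borderline case $s=1$, where $b_n(1)^2$ grows only logarithmically, so by Berry--Esseen the convergence rate in \eqref{CLT} is merely $O\bigl(1/\sqrt{\log n}\bigr)$; this is immaterial for the qualitative statement but does mean the companion local limit theorem (and any explicit error term) will require more care at that endpoint.
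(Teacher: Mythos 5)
Your proposal is correct and follows essentially the same route as the paper: the telescoped product formula $P_n^{h_s}(x)=(n!)^{-s}x\prod_{j=1}^{n-1}(x+j^s)$, Harper's Bernoulli decomposition with $a_n(s)=\mu_n(s)$ and $b_n(s)=\sigma_n(s)$, the bound of the third central moments by the variances, and the divergence $\sigma_n^2(s)\asymp n^{1-s}$ (resp.\ $\asymp\log n$ at $s=1$) are exactly the paper's ingredients. The only cosmetic difference is that you invoke Lyapunov plus P\'olya's theorem for uniformity where the paper applies Berry--Esseen directly (which is how it also gets the explicit $O(1/\sigma_n(s))$ rate of its Theorem \ref{explicit}).
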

Theorem \ref{main:CLT} is proven with Harper's method \cite{Ha67}. The sequences $\{a_n(s)\}_n$ and $\{b_n(s) \}_n$ are
provided by the expected values and variances of a suitable sequence of random variables. Let $h_s(0):=0$.
It is essential, that
\begin{equation}\label{realroots}
P_n^{h_s}(x) = (n!)^{-s} \, \prod_{k=0}^{n-1} \big( x + h_s(k) \big)
\end{equation}
has real roots $- h_s(k) \leq 0$.

%%%%%%%%%%%%%%%%%%%%%%%%%%%%%%%%%%
%%%%%%%%%%%%%%%%%%%%%%%%%%%%%%%%%%
Further, we utilize the Berry--Esseen theorem \cite{Ca15} to control the convergence rate.
The corresponding
expected values and variances are
\begin{eqnarray} 
\mu_n(s) & := &    1+  \sum_{k=1}^{n-1} \frac{1}{1+k^s},       \label{mu}\\
\sigma_n^2(s) & := &  \sum_{k=1}^{n-1} \frac{k^s}{\left(1+k^s\right)^2}. \label{sigma}
\end{eqnarray}
Then $\mu_n(0)= \frac{n+1}{2}$ and $\mu_n(1)=H_n$, where $H_n$ is the $n$th harmonic number. 
Moreover, let generally
$H_n^{(r)}:= \sum_{k=1}^{n} k^{-r}$.
We have $\sigma_n^2(0)= \frac{n-1}{4}$ and $\sigma_n^2(1) = H_n - H_n^{(2)}$.
Most importantly, let $s \in [0,1]$.
Then 
\begin{equation*}
\lim_{n \rightarrow \infty} \sigma_n(s) \longrightarrow \infty.
\end{equation*}
Therefore, we obtain:
\begin{theorem}\label{explicit} 
Suppose $s \in [0,1]$. There exists a positive constant $C$ so
that
\begin{equation*}
\lim_{n \to \infty}  \sup _{x \in \mathbb{R}} \left|
{ \frac{1}{P_n^{h_s}(1)}
\sum_{ k \, \leq \, \mu_n(s)  + x \, \sigma_n(s)} A_{n,k}(s) - \frac{1}{\sqrt{2 \pi}} \int_{-\infty}^x
\mathrm{e}^{-\frac{t^2}{2}} \, \mathrm{d}t} \right| \leq C \, \frac{1}{\sigma_n(s)}.
\end{equation*}
The standard deviation $\sigma_n(s)$
approaches infinity.
\end{theorem}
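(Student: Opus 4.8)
The plan is to realize the normalized coefficient array as the distribution of a sum of independent Bernoulli random variables and then invoke the Berry--Esseen theorem. Concretely, by \eqref{realroots} the polynomial $P_n^{h_s}(x)=(n!)^{-s}\prod_{k=0}^{n-1}(x+h_s(k))$ factors with nonnegative roots, so
\begin{equation*}
\frac{P_n^{h_s}(x)}{P_n^{h_s}(1)}=\prod_{k=0}^{n-1}\frac{x+k^s}{1+k^s}=\prod_{k=1}^{n-1}\left(\frac{k^s}{1+k^s}+\frac{1}{1+k^s}\,x\right),
\end{equation*}
where the $k=0$ factor equals $x$ (using $h_s(0)=0$). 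Hence, writing $S_n:=1+\sum_{k=1}^{n-1}X_k$ where the $X_k$ are independent with $\mathbb{P}(X_k=1)=p_k:=\tfrac{1}{1+k^s}$ and $\mathbb{P}(X_k=0)=1-p_k$, one gets $\mathbb{P}(S_n=j)=A_{n,j}(s)/P_n^{h_s}(1)$. Then $\mathbb{E}[S_n]=\mu_n(s)$ and $\operatorname{Var}(S_n)=\sum_{k=1}^{n-1}p_k(1-p_k)=\sigma_n^2(s)$, matching \eqref{mu}--\eqref{sigma}, and the left-hand side of the claimed inequality is exactly $\sup_{x\in\mathbb{R}}\big|\mathbb{P}\big(S_n\le \mu_n(s)+x\sigma_n(s)\big)-\Phi(x)\big|$.

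Next I would apply the Berry--Esseen theorem for independent, non-identically distributed summands: there is an absolute constant $C_0$ with
\begin{equation*}
\sup_{x\in\mathbb{R}}\left|\mathbb{P}\!\left(\frac{S_n-\mu_n(s)}{\sigma_n(s)}\le x\right)-\Phi(x)\right|\le C_0\,\frac{\sum_{k=1}^{n-1}\mathbb{E}|X_k-p_k|^3}{\sigma_n^3(s)}.
\end{equation*}
For a Bernoulli variable $\mathbb{E}|X_k-p_k|^3=p_k(1-p_k)\big(p_k^2+(1-p_k)^2\big)\le p_k(1-p_k)$, so the numerator is bounded by $\sum_{k=1}^{n-1}p_k(1-p_k)=\sigma_n^2(s)$. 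Therefore the Berry--Esseen bound is at most $C_0/\sigma_n(s)$, which gives the stated estimate with $C=C_0$.

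The only genuine point requiring care is that $\sigma_n(s)\to\infty$, so that the bound $C/\sigma_n(s)$ is meaningful (in particular $b_n(s)=\sigma_n(s)$ is positive for large $n$, as required in Theorem \ref{main:CLT}). This is the step I expect to be the main technical obstacle, though it is elementary: for $s\in[0,1]$ one has $\tfrac{k^s}{(1+k^s)^2}\gg k^{-s}\ge k^{-1}$ for $k\ge 1$, so $\sigma_n^2(s)=\sum_{k=1}^{n-1}\tfrac{k^s}{(1+k^s)^2}$ diverges (comparison with the harmonic series at $s=1$ gives $\sigma_n^2(1)=H_n-H_n^{(2)}\to\infty$, and for $s<1$ the terms are even larger up to constants). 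Combining the divergence of $\sigma_n(s)$ with the Berry--Esseen bound above yields the theorem; note also that the first conclusion, Theorem \ref{main:CLT}, follows immediately by letting $n\to\infty$ with $a_n(s)=\mu_n(s)$ and $b_n(s)=\sigma_n(s)$.
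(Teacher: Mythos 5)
Your proposal is correct and follows essentially the same route as the paper: both realize $A_{n,k}(s)/P_n^{h_s}(1)$ as the law of a sum of independent Bernoulli variables via the real-rooted factorization (Harper's method), bound the third absolute central moments by the variances so that the Berry--Esseen ratio collapses to $1/\sigma_n(s)$, and verify $\sigma_n(s)\to\infty$ by comparison with $\sum k^{-s}$. The only cosmetic slips are that your displayed product over $k\ge 1$ omits the factor $x$ coming from $k=0$ (which you do account for in words) and that the roots are nonpositive rather than nonnegative; neither affects the argument.
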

\begin{remarks} \ \\a) Theorem \ref{explicit} implies Theorem \ref{main:CLT}. In singularity analysis \cite{Be73, Ca15}, one
is most interested in the asymptotic behaviors
of $\{a_n\}_n$ and $\{b_n\}_n$, rather
than in the concrete realization,
as given in Theorem \ref{explicit}. \\
b) The constant be chosen as $C=0.7975$ (we refer to \cite{VB72}, and the survey article \cite{Pi97}). \\
c) Let $s >1$. Then $\lim_{n
\rightarrow \infty} \sigma _{n}\left( s\right)
\leq
\lim _{n\rightarrow \infty }\sum _{k=1}^{n-1}k^{-s}=\zeta \left( s\right) <\infty $. Here we denote by $\zeta(s)$ the Riemann zeta function.
\end{remarks}

\subsection{Local
limit
theorem}
We refer to Section~\ref{four} for an introduction. We prove:
\begin{theorem} 
\label{local}
Let $s \in \mathbb{R}$. Then there exists a universal constant $K>0$, so
that 
\begin{equation*}
\max _k \left| \frac{\sigma_n(s)}{P_n^{h_s}(1)}  \, A_{n,k}(s) - 
\frac{\mathrm{e}^{-\frac{(x_n(s))^2}{2}}}
{\sqrt{2 \pi}}\right| < \frac{K}{\sigma _{n}\left( s\right) },
\end{equation*}
for $x_n(s) = (k - \mu_n(s))/ \sigma_n(s)$.
Uniformly for $k-\mu_n(s) = O(\sigma_n(s))$ we have
\begin{equation*}
\frac{ A_{n,k}(s)}{P_n^{h_s}(1)} \sim   \frac{  \mathrm{e}^{-\left( x_{n}\left( s\right) \right) ^{2}/2}}{ \sigma_n(s) \, \sqrt{2 \pi}}.
\end{equation*}
\end{theorem}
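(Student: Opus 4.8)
The plan is to continue the probabilistic thread behind Harper's proof of Theorem~\ref{main:CLT}, but to upgrade it from a global to a local statement by Fourier inversion (Esseen's smoothing argument). Throughout write $\mu_n=\mu_n(s)$, $\sigma_n=\sigma_n(s)$, $x_n=x_n(s)$, $p_k=(1+k^s)^{-1}$ and $q_k=1-p_k=k^s(1+k^s)^{-1}$. By (\ref{realroots}), $P_n^{h_s}(x)/P_n^{h_s}(1)=\prod_{k=0}^{n-1}\frac{x+k^s}{1+k^s}$ is the probability generating function of $S_n=\sum_{k=0}^{n-1}X_k$ with the $X_k$ independent, $\mathbb{P}(X_k=1)=p_k$, $\mathbb{P}(X_k=0)=q_k$ (so $X_0\equiv1$); hence $A_{n,k}(s)/P_n^{h_s}(1)=\mathbb{P}(S_n=k)$, $\mathbb{E}\,S_n=\mu_n$, $\operatorname{Var}S_n=\sigma_n^2$, and since $X_1$ is Bernoulli$(\tfrac12)$ the lattice span of $S_n$ equals $1$. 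If $\sigma_n$ stays bounded in $n$ (the degenerate case) the claimed inequality holds trivially for $K$ large, since its left side is then bounded; so from here on assume $\sigma_n\to\infty$.

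Let $\varphi_n(t)=\mathbb{E}[\mathrm{e}^{\mathrm{i}tS_n}]=\prod_{k=0}^{n-1}(q_k+p_k\mathrm{e}^{\mathrm{i}t})$ and let $\psi_n(u)=\varphi_n(u/\sigma_n)\,\mathrm{e}^{-\mathrm{i}u\mu_n/\sigma_n}$ be the characteristic function of $(S_n-\mu_n)/\sigma_n$. Lattice Fourier inversion over $[-\pi,\pi]$ for $\mathbb{P}(S_n=k)$, the Fourier inversion formula for the $N(\mu_n,\sigma_n^2)$ density at $k$, and the substitution $t=u/\sigma_n$ together give
\[
\max_k\left|\frac{\sigma_n}{P_n^{h_s}(1)}A_{n,k}(s)-\frac{\mathrm{e}^{-x_n^2/2}}{\sqrt{2\pi}}\right|\le\frac{1}{2\pi}\int_{|u|\le\pi\sigma_n}\bigl|\psi_n(u)-\mathrm{e}^{-u^2/2}\bigr|\,\mathrm{d}u+\frac{1}{2\pi}\int_{|u|>\pi\sigma_n}\mathrm{e}^{-u^2/2}\,\mathrm{d}u,
\]
and the second integral is $O\bigl(\sigma_n^{-1}\mathrm{e}^{-\pi^2\sigma_n^2/2}\bigr)$, hence harmless; everything reduces to the first one.

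For the first integral I split at $|u|=c_0\sigma_n$ for a small absolute $c_0$. On the outer range $c_0\sigma_n\le|u|\le\pi\sigma_n$ I use $|q_k+p_k\mathrm{e}^{\mathrm{i}\theta}|^2=1-2p_kq_k(1-\cos\theta)$, the identity $\sum_{k=1}^{n-1}p_kq_k=\sigma_n^2$, and $1-\cos\theta\ge 2\theta^2/\pi^2$ on $[-\pi,\pi]$ to obtain $|\psi_n(u)|\le\exp\bigl(-\sigma_n^2(1-\cos(u/\sigma_n))\bigr)\le\mathrm{e}^{-2u^2/\pi^2}$, so that range (together with the $N(0,1)$ tail there) contributes $O(\sigma_n\mathrm{e}^{-c\sigma_n^2})=o(\sigma_n^{-1})$. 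On $|u|\le c_0\sigma_n$ I expand $\log\psi_n(u)=\sum_k\log\mathbb{E}[\mathrm{e}^{\mathrm{i}u(X_k-p_k)/\sigma_n}]$; since $|X_k-p_k|\le1$ the absolute third central moments obey $\rho_k\le p_kq_k$, whence $\sum_k\rho_k\le\sigma_n^2$ and $\sum_k(p_kq_k)^2\le\tfrac14\sigma_n^2$, giving $\log\psi_n(u)=-u^2/2+R_n(u)$ with $|R_n(u)|\le C|u|^3/\sigma_n$ for $|u|\le\sigma_n$; choosing $c_0$ small enough that $|R_n(u)|\le u^2/4$ on $|u|\le c_0\sigma_n$ yields $|\psi_n(u)-\mathrm{e}^{-u^2/2}|\le C|u|^3\mathrm{e}^{-u^2/4}/\sigma_n$ there, so this part of the integral is $O(\sigma_n^{-1})$. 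Adding the three pieces proves the first displayed inequality with a constant $K$ depending only on the absolute constants above. For the second assertion, on the range $|x_n|\le M$, i.e.\ $k-\mu_n=O(\sigma_n)$, the factor $\mathrm{e}^{-x_n^2/2}/\sqrt{2\pi}$ is bounded below by a positive constant, so dividing the first display by it gives $A_{n,k}(s)/P_n^{h_s}(1)=\bigl(1+O(\sigma_n^{-1})\bigr)\,\mathrm{e}^{-x_n^2/2}/(\sigma_n\sqrt{2\pi})$ uniformly, which is the asserted $\sim$.

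The one delicate point is obtaining the estimates of the previous paragraph \emph{uniformly in $s$ and $n$}: the near-zero Taylor remainder must be bounded by an absolute constant (this uses only that the $X_k$ are bounded by $1$, so $\sum_k\rho_k\le\sigma_n^2$ and the Lyapunov fraction is $\le\sigma_n^{-1}$), and the medium-range decay of $|\psi_n(u)|$ must be genuinely exponential in $\sigma_n^2$, which is exactly what $|\psi_n(u)|\le\exp(-\sigma_n^2(1-\cos(u/\sigma_n)))$ provides once $\sum_{k}p_kq_k=\sigma_n^2$ is recognized. One could instead invoke a general local central limit theorem for sums of uniformly bounded independent lattice variables of variance $\to\infty$, but running the Fourier estimate by hand is what produces the explicit rate $K/\sigma_n(s)$ and makes the universality of $K$ transparent.
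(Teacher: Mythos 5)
Your proof is correct, but it takes a genuinely different route from the paper. The paper's proof of Theorem~\ref{local} is essentially a citation: from the factorization (\ref{realroots}) the coefficients $\{A_{n,k}(s)\}_k$ form a P\'olya frequency sequence, and the bound $K/\sigma_n(s)$ with a universal $K$ is then exactly Platonov's local limit estimate for P\'olya frequency sequences as recorded in Pitman's survey (formula (25) there). You instead re-derive that estimate from scratch by Esseen's smoothing argument: lattice Fourier inversion over $[-\pi,\pi]$, the exact identity $|q_k+p_k\mathrm{e}^{\mathrm{i}\theta}|^2=1-2p_kq_k(1-\cos\theta)$ together with $\sum_kp_kq_k=\sigma_n^2$ for the medium range, and a third-moment Taylor expansion with $\rho_k\le p_kq_k$ near the origin. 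Your version is self-contained, makes the mechanism and the universality of $K$ visible, and would let one track an explicit numerical constant; the paper's version is shorter, and inherits a constant that is uniform over \emph{all} P\'olya frequency sequences at once. One small repair: your ``degenerate case'' reduction ($\sigma_n$ bounded, $K$ large) does not by itself give a constant uniform in $s$, since for $s>1$ one only has $\sigma_n^2(s)\le\zeta(s)$, which blows up as $s\to1^{+}$. But the case split is unnecessary for the first display: none of your three Fourier estimates actually uses $\sigma_n\to\infty$ --- the outer-range contribution is $O(\sigma_n\mathrm{e}^{-c\sigma_n^2})=O(\sigma_n^{-1})$ with absolute constants for every fixed $n$ --- so you can simply run the main argument for all $s$ and $n\ge2$ and reserve the hypothesis $\sigma_n(s)\to\infty$ for the final asymptotic equivalence, which is where it is genuinely needed.
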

\subsection{Peaks and
plateaux}
The polynomials $P_n^{h_s}\left( x\right) $ are real-rooted. Therefore, a theorem by Newton implies, that
the sequence $\{A_{n,k}(s)\}_k$ is unimodal and has
two modes at most. Either we have one peak, or a plateau.

In the case $s=0$,
we have a peak for $n$ odd at $k=\frac{n+1}{2}$ and a plateau for $n$ even at
$k= \frac{n}{2}$ and $\frac{n+2}{2}$. This is obvious, since the $A_{n,k}(0)$ are binomial coefficients.
The case $s=1$ is more delicate. Let $n \geq 3$. Hammersley \cite{Ha51} conjectured in the context of
Stirling numbers of the first kind that there is always a peak.
This
was
proved by Erd\H{o}s \cite{Er53}. The proof depends on the fact, that
$\left\{ \stirl{
n}{
k}
\right\} _k$ are natural numbers. This allows Erd\H{o}s to apply special results
related to the prime number theorem and certain divisibility properties of the Stirling numbers of the first kind.
Our goal is to contribute to
the case $s\in (0,1)$ and obtain information for $s=1$. But this seems to be
very difficult, since in general,
the numbers $A_{n,k}(s)$ are
not integers.
Nevertheless, by utilizing a
theorem
by Darroch \cite{Da64} we obtain:
\begin{theorem} \label{peak}
Let $n \geq 6$. Let $k_0 \in \mathbb{N}$ be any integer
associated with $s \in (0,1)$ so
that
\begin{equation*}
k_0 = 1 + \sum_{k=1}^{n-1} \frac{1}{1 + k^s}.
\end{equation*}
Then the sequence
$\left\{ A_{n,k}\left( s\right) \right\} $ has a peak at $k_0$.
The number of possible $k_0$ is given by the number of integers between
$H_n$ and $\frac{n+1}{2}$.
\end{theorem}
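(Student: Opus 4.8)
The plan is to combine the real-rootedness of $P_n^{h_s}$ with Newton's inequalities and a theorem of Darroch. By \eqref{realroots} and $h_s(0)=0$,
\[
\frac{P_n^{h_s}(x)}{P_n^{h_s}(1)}\;=\;x\prod_{k=1}^{n-1}\frac{x+k^s}{1+k^s}\;=\;x\prod_{k=1}^{n-1}\Bigl(\tfrac{1}{1+k^s}\,x+\tfrac{k^s}{1+k^s}\Bigr),
\]
so $\frac{1}{P_n^{h_s}(1)}\sum_k A_{n,k}(s)\,x^k$ is the probability generating function of $X_n(s):=1+\sum_{k=1}^{n-1}B_k$, where the $B_k$ are independent Bernoulli variables with $\mathbb{P}(B_k=1)=1/(1+k^s)$. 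Thus $\{A_{n,k}(s)\}_k$ is, up to the positive factor $P_n^{h_s}(1)$, the distribution of $X_n(s)$ (with $A_{n,0}(s)=0$ and $A_{n,k}(s)>0$ for $1\le k\le n$), and $\mathbb{E}[X_n(s)]=1+\sum_{k=1}^{n-1}\frac{1}{1+k^s}=\mu_n(s)$ by \eqref{mu}. The hypothesis on $k_0$ is precisely the statement that $\mu_n(s)$ is the integer $k_0$.

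By Newton's inequalities, the real-rootedness of $P_n^{h_s}$ (equation \eqref{realroots}, with $n$ distinct roots for $s\in(0,1)$) makes $\{A_{n,k}(s)\}_k$ strictly log-concave on $\{1,\dots,n\}$, hence unimodal with at most two modes, which must then be consecutive: the sequence has either a single peak or a plateau on two neighbouring indices. First I would use Darroch's theorem \cite{Da64, Be96} for sums of independent Bernoulli variables: when the mean is an integer $m$, the distribution has a \emph{unique} mode, located at $m$. Applied to $X_n(s)$ with $m=k_0$, this rules out the plateau and pins the peak at $k_0$, which is the assertion of the theorem.

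For the counting statement I would study $s\mapsto\mu_n(s)$ on $[0,1]$. The summand $1/(1+1^s)=\tfrac12$ is constant while each $1/(1+k^s)$ with $k\ge2$ is strictly decreasing in $s$, so for $n\ge3$ the map $\mu_n$ is continuous and strictly decreasing, with endpoint values $\mu_n(0)=\tfrac{n+1}{2}$ and $\mu_n(1)=H_n$ (the identities noted after \eqref{sigma}). Hence $\mu_n$ carries the open interval $(0,1)$ bijectively onto $\bigl(H_n,\tfrac{n+1}{2}\bigr)$, so the admissible $k_0$ are exactly the integers strictly between $H_n$ and $\tfrac{n+1}{2}$. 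The hypothesis $n\ge6$ ensures this set is nonempty (for $3\le n\le5$ it is empty and the statement is vacuous), since $\tfrac{n+1}{2}-H_n>1$ once $n\ge6$.

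I expect the main obstacle to be invoking Darroch's theorem in the sharp form actually needed — a \emph{unique} mode at an integer mean, rather than the weaker form in which the mode is only known to lie within distance one of the mean — because only the sharp form excludes a plateau at $\{k_0-1,k_0\}$ or at $\{k_0,k_0+1\}$. Once that is in hand, the deduction is short; the remaining care is the elementary monotonicity and continuity bookkeeping for $\mu_n(s)$ together with the verification of the small cases $n\le5$.
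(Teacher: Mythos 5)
Your proposal is correct and follows essentially the same route as the paper: Darroch's theorem applied when the mean $\mu_n(s)$ equals the integer $k_0$, combined with the continuity and strict monotonicity of $s\mapsto\mu_n(s)$ mapping $(0,1)$ onto $\bigl(H_n,\tfrac{n+1}{2}\bigr)$ and the observation that $n\ge 6$ makes this interval longer than $1$. The only remark worth making is that your worry about needing a ``sharp form'' of Darroch is unnecessary: the standard statement (every mode lies at distance \emph{strictly} less than $1$ from the mean) already forces the unique mode to be $k_0$ when the mean is the integer $k_0$, since a plateau at $\{k_0,k_0+1\}$ or $\{k_0-1,k_0\}$ would put a mode at distance exactly $1$.
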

\section{The 
probabilistic
viewpoint on
asymptotic
normality}
We begin with a useful tool from probability.
\subsection{The Berry--Esseen
theorem \cite{Ca15},
theorem 3.2.4}
Let $X$ be a random variable. We denote by $\mathbb{E}(X)$ and $\mathbb{V}(X)$ the expected value
and variance of $X$.
\begin{theorem}\label{th:BE}
Let $X_{n,k}$ for $1 \leq k \leq n$ be independent random variables with values in $\{0,1,\ldots, n\}$. Let
$\mu_{n,k}$ be the expected values, $\sigma_{n,k}^2$ the variances,
and
$$\rho_{n,k} = \mathbb{E} (\vert X_{n,k} - \mu_{n,k} \vert^3) < \infty$$ 
the absolute third central moment. 
Let
$\mu_n := \sum_{k=1}^n \mu_{n,k}$, $\sigma_n^2:= \sum_{k=1}^n \sigma_{n,k}^2$,
and $Z_n:= \sum_{k=1}^n X_{n,k}$.
Let $Z_n^{*}= (Z_n - \mu_n)/\sigma_n$. Then
\begin{equation*}
{\norm{\mathbb{P}\left( Z_{n}^{\ast } < x\right) - \frac{1}{\sqrt{2 \pi}} 
\int_{-\infty}^x 
\mathrm{e}^{-\frac{t^2}{2}} \, \mathrm{d}t}}_{\mathbb{R}}      \leq C \,\,
\frac{\sum_{k=1}^n \rho_{n,k}}{\sigma_n^3},
\end{equation*}
where ${\norm{f(x)}}_{\mathbb{R}}$ denotes the supremum norm of $f$ on $\mathbb{R}$
and $C >0$ is a universal constant. This constant can be chosen as $C=0.7975$ \cite{VB72}.
\end{theorem}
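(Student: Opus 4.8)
\begin{sketch}
The plan is to run the classical Fourier-analytic argument. Write $F_n$ for the distribution function of $Z_n^{*}$, let $\Phi$ denote the standard normal distribution function, and set $\varphi_n(t):=\mathbb{E}\bigl(\mathrm{e}^{\mathrm{i}tZ_n^{*}}\bigr)$; by independence $\varphi_n(t)=\prod_{k=1}^{n}\phi_{n,k}(t/\sigma_n)$, where $\phi_{n,k}$ is the characteristic function of $X_{n,k}-\mu_{n,k}$ (all moments used below are finite since the $X_{n,k}$ are bounded). Put $\Lambda_n:=\sigma_n^{-3}\sum_{k=1}^{n}\rho_{n,k}$. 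Because $\sup_x|F_n(x)-\Phi(x)|\le 1$ trivially, I may assume $\Lambda_n$ is smaller than any fixed absolute constant I like, as otherwise there is nothing to prove.

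First I would invoke Esseen's smoothing inequality: for every $T>0$,
\begin{equation*}
\sup_{x\in\mathbb{R}}\bigl|F_n(x)-\Phi(x)\bigr|\ \le\ \frac{1}{\pi}\int_{-T}^{T}\left|\frac{\varphi_n(t)-\mathrm{e}^{-t^2/2}}{t}\right|\,\mathrm{d}t\ +\ \frac{24}{\pi\sqrt{2\pi}}\,\frac{1}{T},
\end{equation*}
which turns the problem into estimating the characteristic functions on a bounded interval and then choosing $T$ well.

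The main step — and the one I expect to be delicate — is to estimate $\varphi_n$ near the origin. Using $\mathbb{E}(X_{n,k}-\mu_{n,k})=0$, $\mathbb{E}\bigl((X_{n,k}-\mu_{n,k})^2\bigr)=\sigma_{n,k}^2$ and Taylor's theorem with the cubic remainder controlled by $\rho_{n,k}$, one obtains $\phi_{n,k}(u)=1-\tfrac12\sigma_{n,k}^2u^2+O(\rho_{n,k}|u|^3)$, and hence $|\phi_{n,k}(u)|\le\mathrm{e}^{-\sigma_{n,k}^2u^2/4}$ on a suitable range of $u$. I would then compare $\varphi_n(t)$ with $\prod_{k=1}^{n}\mathrm{e}^{-\sigma_{n,k}^2t^2/(2\sigma_n^2)}=\mathrm{e}^{-t^2/2}$ through the telescoping estimate $\bigl|\prod_k a_k-\prod_k b_k\bigr|\le\sum_k|a_k-b_k|\prod_{j\ne k}\max(|a_j|,|b_j|)$: the surviving factors supply a Gaussian damping $\mathrm{e}^{-ct^2}$, while the sum of the $n$ error terms, controlled by $\sum_k\sigma_{n,k}^2=\sigma_n^2$ and $\sum_k\rho_{n,k}=\Lambda_n\sigma_n^3$, produces
\begin{equation*}
\bigl|\varphi_n(t)-\mathrm{e}^{-t^2/2}\bigr|\ \le\ C_0\,\Lambda_n\,|t|^3\,\mathrm{e}^{-ct^2}\qquad\text{for }|t|\le T,
\end{equation*}
with $C_0,c>0$ absolute and $T$ taken of order $\Lambda_n^{-1}$.

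Finally I would assemble the pieces: with $T\asymp\Lambda_n^{-1}$, the integral term in Esseen's inequality is at most $\tfrac{C_0}{\pi}\Lambda_n\int_{\mathbb{R}}t^2\mathrm{e}^{-ct^2}\,\mathrm{d}t=O(\Lambda_n)$, and the smoothing term is $\tfrac{24}{\pi\sqrt{2\pi}}\cdot O(\Lambda_n)$, so $\sup_x|F_n(x)-\Phi(x)|\le C\,\Lambda_n=C\,\sigma_n^{-3}\sum_{k=1}^{n}\rho_{n,k}$ for a universal $C$ — and, since $\Phi$ is continuous, the same bound holds for $\mathbb{P}(Z_n^{*}<x)$. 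The hard part is not this skeleton but the characteristic-function estimate: I must pin down the precise window on which each factor $\phi_{n,k}(t/\sigma_n)$ is close enough to $1$ for the telescoping estimate to genuinely yield the Gaussian factor, and check that this window can be taken of order $\Lambda_n^{-1}$ (so that the smoothing term is also $O(\Lambda_n)$); and to obtain the sharp value $C=0.7975$ one has to carry every constant through the Taylor remainders and optimize $T$, which is exactly the content of van Beek's refinement \cite{VB72}.
\end{sketch}
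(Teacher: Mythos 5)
The paper does not prove this statement at all: it is quoted as the classical Berry--Esseen theorem, with the inequality taken from Canfield \cite{Ca15} and the constant $C=0.7975$ from van Beek \cite{VB72}. Your sketch is therefore not ``a different route from the paper'' so much as an attempt to supply the proof the paper outsources, and it follows the standard Fourier-analytic template: Esseen's smoothing inequality, a Taylor expansion of each factor $\phi_{n,k}$ with the cubic remainder controlled by $\rho_{n,k}$, the telescoping product comparison against $\mathrm{e}^{-t^2/2}$, and the choice $T\asymp\Lambda_n^{-1}$. That skeleton is correct, and it is in fact more general than the paper's statement (boundedness of the $X_{n,k}$ is never used beyond guaranteeing finite third moments). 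Two caveats. First, the estimate $|\varphi_n(t)-\mathrm{e}^{-t^2/2}|\le C_0\Lambda_n|t|^3\mathrm{e}^{-ct^2}$ on the whole window $|t|\le T$ is precisely the nontrivial lemma (Feller's Lemma in Ch.\ XVI.5 of \cite{Fe71} is the standard reference); you correctly identify it as the delicate point but do not carry it out, so as written this is an outline rather than a proof. Second, this argument can only deliver \emph{some} universal constant (the route you describe yields roughly $C\approx 3$ to $6$ depending on care); the specific value $0.7975$ genuinely requires van Beek's optimized treatment, which you rightly defer to \cite{VB72} --- exactly as the paper itself does. So there is no wrong idea here, but the honest conclusion is that your sketch, completed, proves the theorem with an unspecified universal $C$, and the sharp constant must still be cited.
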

Let $P_n(x)= \sum_{k=0}^n a_{n,k} \, x^k$ 
be a monic polynomial of degree $n$ with $ a_{n,k} \geq 0$.
Suppose, the roots of $P_n(x)$ are real and $P_n(x) = \prod_{k=1}^n \left( x + r_k \right)$.
Harper \cite{Ha67} introduced a triangular array of Bernoulli random variables $X_{n,j}$ with
distribution 
\begin{equation*}
\mathbb{P}\left( X_{n,j}=0\right) := \frac{r_j}{1 + r_j} \text{ and }
\mathbb{P}\left( X_{n,j}=1\right) := \frac{1}{1 + r_j}.
\end{equation*}
Let $Z_n:= \sum_{j=1}^n X_{n,j}$. Then $\mathbb{P}\left( Z_{n}=k\right) = \frac{a_{n,k}}{P_n(1)}$.
Let $X_{n,j}$ be given. Then
\begin{equation*} 
\mathbb{E}(X_{n,j}) = \frac{1}{1+r_j}, \,\, \, 
\mathbb{V}(X_{n,j}) = \frac{r_j}{( 1 + r_j)^2},\, \,\,
\mathbb{E} ( \vert X_{n,j} - \mathbb{E}(X_{n,j})\vert^3) =  \frac{r_j(1+ r_j^2)}{(1 + r_j)^4}.
\end{equation*}
This implies that
\begin{equation} 
\label{inequality}
\mathbb{E} ( \vert X_{n,j} - \mathbb{E}(X_{n,j})\vert^3) < \mathbb{V}(X_{n,j}).
\end{equation}
We apply Harper's method setting
$P_n(x)= (n!)^s \,\, P_n^{h_s}(x)$.
The
expected value of $Z_n(s)$ is given by $\mu_n(s)$ and the variance by $\sigma_n(s)^2$,
as recorded in (\ref{mu}) and (\ref{sigma}).

\begin{lemma}
a) Let $s \in [0,1)$.
Then
\begin{equation*}
\mu_n(s) \sim \frac{n^{1-s}}{1-s} \text{ and } \sigma_n(s) \asymp n^{1-s}.
\end{equation*}
b)
We obtain
\begin{equation*}
\mu_n(1) \sim \ln \, n \text{ and } \sigma_n^2 (1) \asymp \ln \, n.
\end{equation*}
\end{lemma}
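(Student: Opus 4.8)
The plan is to deduce both statements from the elementary asymptotics of the generalized harmonic partial sums $H_{n}^{(r)}=\sum_{k=1}^{n}k^{-r}$, namely (by integral comparison / Euler--Maclaurin, for each fixed exponent)
\[
H_{n-1}^{(r)}\sim\frac{n^{1-r}}{1-r}\ \ (0\le r<1),\qquad H_{n-1}^{(1)}=H_{n-1}\sim\ln n,\qquad H_{n-1}^{(r)}=O(1)\ \ (r>1).
\]

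First I would treat $\mu_n(s)$ for fixed $s\in(0,1)$. Writing $\frac{1}{1+k^{s}}=k^{-s}-\frac{1}{k^{s}(1+k^{s})}$ gives
\[
\mu_n(s)=1+H_{n-1}^{(s)}-\sum_{k=1}^{n-1}\frac{1}{k^{s}(1+k^{s})}.
\]
The main term is $H_{n-1}^{(s)}\sim\frac{n^{1-s}}{1-s}$, and since $0\le\frac{1}{k^{s}(1+k^{s})}\le k^{-2s}$ the correction is bounded by $H_{n-1}^{(2s)}$, which is $O(n^{1-2s})$, $O(\ln n)$ or $O(1)$ according as $s<\tfrac12$, $s=\tfrac12$ or $s>\tfrac12$; in every case it is $o(n^{1-s})$, as is the constant $1$. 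Hence $\mu_n(s)\sim\frac{n^{1-s}}{1-s}$. The boundary value $s=0$ is genuinely different and is handled by the explicit formula $\mu_n(0)=\frac{n+1}{2}$ recorded above (the constant there is $\tfrac12$, not $\tfrac1{1-s}=1$), so the equivalence in (a) is meant for $s\in(0,1)$.

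For the variance I would avoid any expansion and simply sandwich: for $k\ge1$ one has $k^{s}\le 1+k^{s}\le 2k^{s}$, hence $\tfrac14 k^{-s}\le\frac{k^{s}}{(1+k^{s})^{2}}\le k^{-s}$, so that $\tfrac14 H_{n-1}^{(s)}\le\sigma_n^{2}(s)\le H_{n-1}^{(s)}$ and therefore $\sigma_n^{2}(s)\asymp n^{1-s}$ for all $s\in[0,1)$ (the endpoint $s=0$ being consistent with $\sigma_n^{2}(0)=\frac{n-1}{4}$); in particular the displayed ``$\sigma_n(s)\asymp n^{1-s}$'' is to be read as $\sigma_n(s)^{2}\asymp n^{1-s}$. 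For part (b), $s=1$: an index shift gives $\mu_n(1)=1+\sum_{k=1}^{n-1}\frac1{1+k}=H_n\sim\ln n$, and the partial fraction $\frac{k}{(1+k)^{2}}=\frac1{1+k}-\frac1{(1+k)^{2}}$ telescopes the variance to $\sigma_n^{2}(1)=H_n-H_n^{(2)}$; since $H_n^{(2)}\to\zeta(2)$ is bounded while $H_n\to\infty$, we get $\sigma_n^{2}(1)\sim\ln n$, in particular $\sigma_n^{2}(1)\asymp\ln n$.

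There is no real obstacle here: the whole argument is bookkeeping of lower-order terms. The only places that need a moment's attention are the borderline exponent $2s=1$ in the correction for $\mu_n$, where one uses $\ln n=o(\sqrt n)$, and the degeneration of the constant $\frac1{1-s}$ at $s=0$, which is why (a) is an asymptotic equivalence only on $(0,1)$ whereas the weaker ``$\asymp$'' estimate for $\sigma_n^{2}$ persists at the endpoint.
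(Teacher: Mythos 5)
Your proof is correct, and for the mean it takes a genuinely different route from the paper. The paper bounds $\mu_n(s)$ from above by $2+\int_1^{n-1}t^{-s}\,\mathrm{d}t$ and from below by discarding the first $K$ terms and using $\frac{1}{1+k^s}\ge\frac{1}{K^{-s}+1}k^{-s}$ for $k\ge K$, then lets $K\to\infty$ to squeeze the ratio between $1-\varepsilon$ and $1+\varepsilon$; you instead write $\frac{1}{1+k^s}=k^{-s}-\frac{1}{k^s(1+k^s)}$ exactly and show the correction is $O\bigl(H_{n-1}^{(2s)}\bigr)=o(n^{1-s})$. Your decomposition is cleaner and even yields an explicit error term, at the cost of a small case split at $2s=1$. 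For the variance both arguments are the same sandwich $\tfrac14k^{-s}\le\frac{k^s}{(1+k^s)^2}\le k^{-s}$ plus integral comparison, and for $s=1$ your telescoping to the exact identities $\mu_n(1)=H_n$ and $\sigma_n^2(1)=H_n-H_n^{(2)}$ is sharper than the paper's second round of integral estimates (the paper records these identities in Section~2 but does not use them in the proof). Two further points in your favour: you correctly note that the displayed $\sigma_n(s)\asymp n^{1-s}$ must be read as $\sigma_n^2(s)\asymp n^{1-s}$ (the paper's own proof only bounds $\sigma_n^2$, and $\sigma_n^2(0)=\frac{n-1}{4}$ forces this reading), and you correctly observe that the asymptotic $\mu_n(s)\sim\frac{n^{1-s}}{1-s}$ fails at $s=0$, where $\mu_n(0)=\frac{n+1}{2}\sim\frac n2$ rather than $n$. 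The paper's lower-bound constant $\frac{1}{K^{-s}+1}$ tends to $1$ as $K\to\infty$ only for $s>0$, so its own argument silently breaks at the endpoint; restricting the $\sim$ statement in part (a) to $s\in(0,1)$, as you do, is the right fix, and it is harmless for the rest of the paper since only $\sigma_n(s)\to\infty$ is ever used.
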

Recall that $f \asymp g$ is an
abbreviation for $f=
{O}(g)$ and $g =
{O}(f)$.

\begin{proof}
a) Let $n\geq 2$. We have
$
\frac{1}{1+k^{s}}\leq \frac{1}{k^{s}}$.
For $k
\leq t\leq
k+1
$ we have
$\left(
k+1\right)
^{-s}\leq t^{-s}\leq
k
^{-s}$. Therefore,
$
1+\sum _{k=1}^{n-1}\frac{1}{1+k^{
s}}\leq 2+\int _{1}^{n-1}t^{-s}\,\mathrm{d}t$.
Now, for $0\leq s<1$, we obtain
$
\sum _{k=1}^{n-1}\frac{1}{1+k^{
s}}\leq
2+\frac{\left( n-1\right) ^{1-s}-1}{1-s}$.

For $k\geq K$, we have $\frac{1}{1+k^{s}}\geq \frac{1}{K^{-s}+1}k^{-s}$.
Therefore,
$1+\sum _{k=1}^{n-1}\frac{1}{1+k^{s}}\geq \frac{1}{K^{-s}+1}\int _{K}^{n}t^{-s}\,\mathrm{d}t$.
For $0\leq s<1$, this shows
$1+\sum _{k=1}^{n-1}\frac{1}{1+k^{s}}\geq \frac{1}{K^{-s}+1}\frac{n^{1-s}-K^{1-s}}{1-s}$.

This
indicates that for $0\leq s<1$ and
arbitrarily small $\varepsilon >0$, there is an
$N\in \mathbb{N}$ so
that
$\left( 1-\varepsilon \right) \frac{n^{1-s}}{1-s}\leq \mu _{n}\left( s\right)
\leq \left( 1+
\varepsilon \right) \frac{n^{1-s}}{1-s}$ for all
$n \geq N$.
Therefore, for $0\leq s<1$ and
all $\varepsilon >0$, there is an $N\in \mathbb{N}$,
such that for all $n\geq N$ holds
$1-\varepsilon \leq \frac{\mu _{n}\left( s\right) }{\frac{n^{1-s}}{1-s}}\leq 1+\varepsilon$.

We have
$\sigma _{n}^{2}\left( s\right) \geq \sum _{k=1}^{n-1}\frac{1}{4k^{s}}\geq \frac{1}{4}\int _{1}^{n}t^{-s}\,\mathrm{d}t$.
So, for $0\leq s<1$, we obtain
$\sigma _{n}^{2}\left( s\right) \geq \frac{n^{1-s}-1}{4-4s}$.
Additionally, we have
$\sigma _{n}^{2}\left( s\right) \leq \sum _{k=1}^{n-1}k^{-s}=\mu _{n}\left( s\right) $,
meaning that the same upper bounds also apply here.

b) Similarly, for $s=1$, we obtain
$\frac{\ln \left( n\right) -\ln \left( K\right) }{K^{-s}+1}\leq
1+\sum _{k=1}^{n-1}\frac{1}{1+k^{
s}}\leq 2+\ln \left( n-1\right) $,
$1-\varepsilon \leq \frac{\mu _{n}\left( 1\right) }{\ln \left( n\right) }\leq 1+\varepsilon $,
and
$\sigma _{n}^{2}\left( 1\right) \geq \frac{1}{4}\ln \left( n\right) $.
\end{proof}
\begin{corollary} \label{infty}
Let $s \in [0,1]$.
Then $\lim_{n \to \infty} \sigma_n(s) = \infty$.
\end{corollary}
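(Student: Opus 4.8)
The plan is to deduce the statement directly from the lower bounds on the variance already established in the Lemma, treating separately the two ranges it covers. For $s \in [0,1)$ I would invoke part~a): there the proof produced the explicit estimate $\sigma_n^2(s) \geq \frac{n^{1-s}-1}{4(1-s)}$, and since the exponent $1-s$ is strictly positive the right-hand side diverges to $+\infty$ with $n$; taking square roots gives $\sigma_n(s) \to \infty$. For the remaining value $s=1$ I would invoke part~b), which gives $\sigma_n^2(1) \geq \frac14 \ln n \to \infty$. Since $[0,1]$ is the disjoint union of $[0,1)$ and $\{1\}$, this settles every case.

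Alternatively — and this is the form I would actually write, since it avoids the case split — I would record a single uniform lower bound. For every integer $k \geq 1$ and every real $s$ one has $1 + k^s \leq 2k^s$, so
\begin{equation*}
\sigma_n^2(s) \;=\; \sum_{k=1}^{n-1} \frac{k^s}{(1+k^s)^2} \;\geq\; \sum_{k=1}^{n-1} \frac{k^s}{4k^{2s}} \;=\; \frac14 \sum_{k=1}^{n-1} k^{-s} \;=\; \frac14\, H_{n-1}^{(s)}.
\end{equation*}
When $s \leq 1$ and $k \geq 1$ we have $k^{-s} \geq k^{-1}$, hence $H_{n-1}^{(s)} \geq H_{n-1}$, and the harmonic numbers $H_{n-1}$ diverge. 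Thus $\sigma_n^2(s) \geq \frac14 H_{n-1} \to \infty$, uniformly for $s \in [0,1]$, and the corollary follows upon taking square roots.

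I do not expect a genuine obstacle here: the analytic work — comparing the defining sums with integrals and with the harmonic series — has already been carried out in the proof of the Lemma, so the corollary merely amounts to noting that those lower bounds tend to infinity. The one point worth flagging is that the divergence is driven by the non-summability of $\sum_k k^{-s}$, which holds exactly when $s \leq 1$; this is the same threshold appearing in Remark~c) after Theorem~\ref{explicit}, where for $s>1$ the variance stays bounded by $\zeta(s)$ and the argument breaks down.
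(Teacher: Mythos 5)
Your proposal is correct and matches the paper's route: the corollary carries no separate proof precisely because it follows from the lower bounds $\sigma_n^2(s)\geq \frac{n^{1-s}-1}{4-4s}$ (for $s\in[0,1)$) and $\sigma_n^2(1)\geq\frac14\ln n$ established in the Lemma, which is exactly your first argument, and your unified variant rests on the same inequality $\frac{k^s}{(1+k^s)^2}\geq\frac{1}{4k^s}$ that the Lemma's proof already uses before comparing with an integral. One small caveat: your claim that $1+k^s\leq 2k^s$ holds for \emph{every} real $s$ is false for $s<0$ and $k\geq 2$ (it requires $k^s\geq 1$, i.e.\ $s\geq 0$), but this does not affect the corollary, which only concerns $s\in[0,1]$.
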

\begin{remark}
Let $0 \leq s_1 \leq s_2 $. Then $\sigma_n(s_1) \geq \sigma_n(s_2)$, since $\frac{\mathrm{d}}{\mathrm{d}s} \sigma_n(s) <0$.
\end{remark}

\subsection{Proof of Theorem \ref{explicit} }

Let $s \in [0,1]$.
Let $X_{n,k}(s) \in \{0,1\}$ for $1 \leq k \leq n$ be a random variable with values $0$ and $1$. Here 
\begin{equation*}
\mathbb{P}\left (X_{n,k}(s) =1\right) = \frac{1}{ 1 + r_{n,k}(s)},
\end{equation*}
where $r_{n,k}(s) = h_s(k-1)$.
We put $Z_n(s):= \sum_{k=1}^n X_{n,k}(s)$. Then
\begin{equation*}
\mathbb{P}\left( Z_{n}\left( s\right) =k\right) =  \frac{A_{n,k}(s)}{P_n^{h_s}(1)}.
\end{equation*}
Thus, we obtain
\begin{equation*}
\mathbb{E}(Z_n(s)) = \mu_n(s) = 1 + \sum_{k=1}^{n-1}  \frac{1}{1 + k^s} 
\text{ and } \mathbb{V}(Z_n(s)) = \sigma_n(s)^2= \sum_{k=1}^{n-1} \frac{k^s}{( 1 + k^s)^2}.
\end{equation*}
Corollary \ref{infty} states, that the variance
approaches infinity.
This proves Theorem \ref{explicit}.
\subsection{Proof of Theorem  \ref{main:CLT}}
This follows from Theorem \ref{explicit}.
The crucial part of successfully applying
Harper's method and the Berry--Esseen
theorem is that
the variance
approaches infinity.

\section{\label{four}Local
limit
theorem}
A double indexed sequence $\{a_{n,k}\}_{n,k}$ satisfies a local limit theorem on a set $S$ of real numbers provided
\begin{equation*}
\sup _{x \in S} \left| \frac{ \sigma_n \, a(n, \floor{\mu_n + x \, \sigma_n})}{\sum_k a(n,k)} - \frac{\mathrm{e}^{- x^2/2}}{\sqrt{2 \pi}} \right| \longrightarrow 0
\end{equation*}
(cf.\ Canfield \cite{Ca15}, section 3.7). We recall the following result due to Bender \cite{Be73}.
\begin{theorem}[Bender]
Suppose, that the
$\left\{ a\left( n,k\right) \right\} _{k}$ for
$n\in \mathbb{N}$ are asymptotically
normal, and $\sigma_n^2 \rightarrow \infty$. If for each $n$ the sequence
$\left\{ a\left( n,k\right) \right\} _{k}$ is
log-concave in $k$, then
$\left\{ a\left( n,k\right) \right\} _{k}$
satisfies
a local limit theorem on $S= \mathbb{R}$.
\end{theorem}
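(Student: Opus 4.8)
The plan is to upgrade the integrated (distributional) statement of asymptotic normality to the pointwise (local) statement, using log-concavity purely as a regularity device through a sandwich argument. Write $p_{n,k} := a(n,k)/\sum_j a(n,j)$ and $\Phi(x) := \frac{1}{\sqrt{2\pi}}\int_{-\infty}^x e^{-t^2/2}\,\mathrm{d}t$. By hypothesis the normalized cumulative distribution $F_n(x) := \sum_{k \le \mu_n + x\sigma_n} p_{n,k}$ converges to $\Phi$ uniformly on $\R$. Setting $y := (k-\mu_n)/\sigma_n$, the target is to prove $\sigma_n\,p_{n,k} \to \frac{1}{\sqrt{2\pi}}\,e^{-y^2/2}$ uniformly in the integer $k$, which is exactly the assertion that a local limit theorem holds on $S=\R$.

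First I would extract from log-concavity the single structural fact the argument needs: since $p_{n,k}^2 \ge p_{n,k-1}\,p_{n,k+1}$, the ratios $p_{n,k+1}/p_{n,k}$ are non-increasing, so each row is unimodal with a mode $m_n$, increasing for $k \le m_n$ and decreasing for $k \ge m_n$. Thus on either side of the mode the terms are monotone, and this monotonicity is all the sandwich uses.

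Next comes the core estimate. Fix a small $\ve > 0$ and an integer $k$; treat the case $k \ge m_n$ (the case $k \le m_n$ is identical after swapping the two blocks below). Consider
\[
S_n^-(k) := \sum_{\mu_n+(y-\ve)\sigma_n < j \le k} p_{n,j}, \qquad S_n^+(k) := \sum_{k \le j < \mu_n+(y+\ve)\sigma_n} p_{n,j},
\]
each a block of $\ve\sigma_n + O(1)$ consecutive integers. On the decreasing side every summand of $S_n^-(k)$ is $\ge p_{n,k}$ and every summand of $S_n^+(k)$ is $\le p_{n,k}$, whence
\[
\frac{S_n^+(k)}{\ve\sigma_n + O(1)} \;\le\; p_{n,k} \;\le\; \frac{S_n^-(k)}{\ve\sigma_n + O(1)}.
\]
Here the assumption $\sigma_n^2 \to \infty$ is essential: it makes each block contain $\ve\sigma_n(1+o(1))$ integers, so the $O(1)$ count error is negligible and $\sigma_n/(\ve\sigma_n+O(1)) \to 1/\ve$. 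The two block sums are differences of the cumulative distribution, $S_n^-(k) = F_n(y) - F_n(y-\ve)$ and $S_n^+(k) = F_n(y+\ve) - F_n(y)$, so uniform convergence $F_n \to \Phi$ gives, uniformly in $k$,
\[
\frac{\Phi(y+\ve)-\Phi(y)}{\ve} + o(1) \;\le\; \sigma_n\,p_{n,k} \;\le\; \frac{\Phi(y)-\Phi(y-\ve)}{\ve} + o(1),
\]
the $o(1)$ being as $n \to \infty$ for fixed $\ve$.

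Finally I would let $\ve \to 0$. Since $\Phi$ is continuously differentiable with $\Phi'(y) = \frac{1}{\sqrt{2\pi}}e^{-y^2/2}$ and bounded second derivative, both outer difference quotients converge to $\Phi'(y)$ uniformly in $y \in \R$, pinning $\sigma_n\,p_{n,k}$ to the normal density uniformly and completing the proof. The main obstacle is uniformity in $k$ across all of $\R$: one must handle the crossing of the mode $m_n$ (the two one-sided arguments overlap there and, using only monotonicity on each side, still bracket $p_{n,k}$ for $k$ within $O(1)$ of $m_n$) and the tails, where the density and both block sums are simultaneously tiny. In the tails the sandwich remains valid and the error terms are uniformly controlled by $\sup_x|F_n(x)-\Phi(x)|$; if a cleaner uniform tail bound is wanted, log-concavity also supplies a geometric estimate $p_{n,k} \le p_{n,m_n}\,\rho^{\,|k-m_n|}$ with $\rho<1$ away from the mode, which dominates the Gaussian tail directly.
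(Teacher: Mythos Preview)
The paper does not prove this theorem: it is quoted as a known result of Bender \cite{Be73} (note the sentence ``We recall the following result due to Bender'' immediately preceding the statement), so there is no argument in the paper to compare your attempt against.

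That said, your sandwich argument is precisely the standard proof and is correct in outline. One point needs more care. When the left block $[k-\ve\sigma_n,k]$ straddles the mode, the affected values of $k$ fill an interval of width about $\ve\sigma_n$, not $O(1)$ as you write, and on that range the claim ``every summand of $S_n^-(k)$ is $\ge p_{n,k}$'' can genuinely fail, so the upper half of your sandwich is not yet justified there. The usual repair actually uses log-concavity rather than mere unimodality: from $p_{n,j}^2 \ge p_{n,j-1}p_{n,j+1}$ one obtains
\[
p_{n,m_n}\;\le\;\frac{p_{n,\,m_n+\lceil\ve\sigma_n\rceil}^{\,2}}{p_{n,\,m_n+2\lceil\ve\sigma_n\rceil}},
\]
and both quantities on the right lie outside the problematic range, where your sandwich already gives $\sigma_n p_{n,m_n+\lceil\ve\sigma_n\rceil}\le \phi(y_{m_n})+O(\ve)+o(1)$ and $\sigma_n p_{n,m_n+2\lceil\ve\sigma_n\rceil}\ge \phi(y_{m_n})-O(\ve)-o(1)$. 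This yields $\sigma_n p_{n,m_n}\le \phi(y_{m_n})+O(\ve)+o(1)$, and since every $k$ with $|k-m_n|<\ve\sigma_n$ has $p_{n,k}\le p_{n,m_n}$ and $|y_k-y_{m_n}|<\ve$, the upper bound extends across the gap. With this adjustment your proof is complete; the tail remark about a geometric decay is unnecessary, as the two-sided sandwich and $\sup_x|F_n(x)-\Phi(x)|\to 0$ already handle the tails uniformly.
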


\subsection{P\'olya
frequency
sequences and
limit
theorems}
We follow the excellent survey by Pitman \cite{Pi97} and apply
several results to the sequences $\{ A_{n,k}(s)\}_k$.

Let $(a_0,a_1, \ldots, a_n)$ be a
sequence of non-negative real numbers.
Let $P_n(x):= \sum_{k=0}^n \, a_k \, x^k$ be real rooted and $P_n(1)>0$.
Then the sequence is called a (finite) P\'olya frequency sequence.
Let $Z_n \in \{0,1, \ldots, n\}$ be a random variable with
$\mathbb{P}\left( Z_{n}=k\right) := \frac{a_k}{P_n(1)}$,
mean $\mu$, and variance $\sigma^2$.
Then 
\begin{equation*}\label{clt}
\max _k \left|             \mathbb{P}\left( 0 \leq Z_n \leq k\right) -
\frac{1}{\sqrt{2 \, \pi}} \, \int_{-\infty}^{\frac{k - \mu}{\sigma}}
\mathrm{e}^{- \frac{t^2}{2}} \, \mathrm{d}t
\right| < \frac{0.7975}{\sigma}
\end{equation*}
(see \cite{Pi97}, formula (24) or \cite{VB72} for a reference). 

Further, there exists a universal constant $K$ so
that
\begin{equation*}\label{llt}
\max _k \left|   \sigma \mathbb{P}\left( Z_n = k\right) -
\frac{1}{\sqrt{2 \, \pi}} \,
\mathrm{e}^{- \left(\frac{k - \mu}{\sigma}\right)^2 /2}
\right| < \frac{K}{\sigma}.
\end{equation*}
The bound is due to Platanov \cite{Pl80} (see also \cite{Pi97}, formula (25)).
\begin{proof}[Proof  of Theorem \ref{local}]
It follows from our previous considerations, that $\{A_{n,k}(s)\}_k$ is a P\'olya frequency sequence for all $s \in \mathbb{R}$.
This implies the
theorem.
\end{proof}

\section{Peaks of $\{A_{n,k}(s)\}_k$}
We recall
Darroch's theorem
\cite{Da64}. Let $(a_0,a_1, \ldots, a_n)$ 
be a P\'olya frequency sequence.
Let $p(x):= \sum _{k=0}^{n}a_{k}x^{k}$ with $p(1)>0$. Let $\mu_n:= \frac{p'(1)}{p(1)}$.
Newton's theorem implies, that the sequence $\{a_k\}_k$ is unimodal and has
two modes at most. 
Darroch proved, that the modes have
distance
less than $1$ from $\mu_n$.
Armed with the results of the previous sections we have:
\begin{proof} [Proof of Theorem \ref{peak}] \ \\
We consider the polynomials $P_n^{h_s}(x)$. Then $\mu_n(s) = 1 + \sum_{k=1}^{n-1} \frac{1}{1+k^s}$.
Suppose, that $\mu_n(s)$ is an integer.
Then $\{A_{n,k}(s)\}_k$ has a peak. Let us restrict $\mu_n$ to
$[0,1]$. In this case
\begin{equation*}
\mu_n: [0,1] \longrightarrow [ \mu_n(1), \mu_n(0)], \quad s \mapsto \mu_n(s).
\end{equation*}
It construes that $
\mu_n$ is differentiable and strictly monotone
for $n\geq 3$.
We have 
\begin{equation*}
\frac{\mathrm{d}}{\mathrm{d}s}
\mu _{n}\left( s
\right) = - \sum_{k=2}^{n-1} \frac{ k^s\ln k}{(1+k^s)^2} <0.
\end{equation*}
Therefore, $\mu_n$ is bijective. Let $n \geq 6$. Then $\mu_n(0) - \mu_n(1) >1$. This implies that integers $k_0 \in
\big(\mu_n(1), \mu_n(0)\big)$ exist and are realizable
by  suitable $s \in (0,1)$:
\begin{equation*}
k_0=1 + \sum_{k=1}^{n-1} \frac{1}{1+k^s}.
\end{equation*}
Let such an $s$ be given. Then $\{A_{n,k}(s)\}_k$ has a peak at $k_0$.
\end{proof}
We add the following approximation of $\mu _{n}\left( s\right) $.
\begin{remark}
Let $s\in \left[ 0,1\right] $ and $n \geq 3$.
There exists a $\xi \in [0,1]$ so
that
\begin{equation*}
\mu _{n}\left( s\right) = \frac{n+1}{2} - \left(\frac{1}{4} \sum_{k=2}^{n-1} \ln k\right) \,\, s+ \frac{\mu _{n}^{\prime \prime }\left( \xi \right) }{2} \,\, s^2.
\end{equation*}
\end{remark}

Finally, we provide an illustration of Theorem \ref{peak} by
Table~\ref{erdos}.
Let $m_n(1)$ be the unique mode of $\{A_{n,k}(1)\}$, as proven by Erd\H{o}s. 
\begin{table}[H]
$$
\begin{array}{|c|c|c|c|c|c|c|c|c|c|c|c|c|} \hline
n & 1&2&3&4&5 &6&7&8&9&10 &100&1000\\ \hline
H_n & 1 & 1.5 & 1.83 &2.08 & 2.28 & 2.45 & 2.59& 2.72 &  2.83& 2.93& 5.19& 7.49\\ \hline
m_n(1) & - & - & 2& 2  & 2& 2&2 & 3& 3 & 3& 5& 7
\\ \hline
\frac{n+1}{2} &1 & 1.5&2 & 2.5& 3& 3.5 & 4 &4.5& 5 &5.5 & 50.5 & 500.5 \\ \hline
\end{array}
$$
\caption{\label{erdos}Modes for $s=1$ and related values.}
\end{table}

\section{Applications}
\subsection{Approximation of Stirling numbers of the first kind}
Wilf \cite{Wi93} contributed
to the asymptotic
behavior of Stirling numbers of the first kind.
Several asymptotic formulas are provided. Wilf also compares his results with
Jordan's formula
in the case $\stirl{
100
}{5}
$ and presents numerical data. Although, the focus of this paper is not to obtain 
best approximations, the numerical value we obtain is already
solid. We refer to Theorem~\ref{local} and the approximation
\begin{equation}\label{asymp}
\frac{
1
}{100!}\stirl{100}{5}
\approx \frac{\mathrm{e}^{- \left(\frac{5 - \mu_{100}(1)}{\sigma_{100}(1)}\right)^2  /2}}{ \sigma_{100}(1) \sqrt{2 \pi}}.
\end{equation}
We have $\mu_{100}(1) \approx
5.19$ and $\sigma_{100}(1)
\approx 1.88477$.
Wilf considers the value of $\stirl{
100
}{5}
/99!$
(Table~\ref{wilf}).
\begin{table}[H]
\begin{tabular}{l|l|l|}
& $\stirl{
100
}{5
}/99! $ & Error \\ \hline
Exact value & $21.1204415 \ldots$ & - \\ \hline
Jordan formula & 18.740 \ldots & $
\approx 11 \%$ \\ \hline
$3$ term of Eq. ($9$) (Wilf) & $21.24986 \ldots$ & $\approx 0.613
\%$ \\ \hline
Theorem $1$ above (Wilf)& $20.960 \ldots $ & $\approx 0.76 \%$ \\ \hline
Approximation (\ref{asymp}) (this paper) & $21.062180 \ldots $ & $\approx 0.28 \%$ \\ \hline
Eq. ($7$) to order $1/n$ (Wilf) & $21.12070 \ldots $ & $\approx 0.0012 \%$ \\ \hline
Eq. ($7$) to order $1/n^2$ (Wilf) & $21.1204409  \ldots $ & $\approx 0.000003 \%$ \\ \hline
\end{tabular}
\caption{\label{wilf}Several approximations of the maximal value for $n=100$ (see Wilf \cite{Wi93}, page 349 for details).}
\end{table}

\subsection{One mode property of the Stirling numbers of the
first
kind}
Let $n \geq 3$. Erd\H{o}s \cite{Er53} proved that $\left\{ \stirl{
n}{
k}\right\} _k$ has one mode. We give a new proof for infinitely many $n$.
A variant of Darroch's
theorem \cite{Da64, Pi97} states: Let $\{a_k\}_k$ be a P\'olya frequency
sequence.
Let $\mu_n = \frac{p_n'(1)}{p_n(1)}$, 
then the sequence has exactly one mode $k_0$, if
\begin{equation*}
k_0 \leq \mu_n  < k_0 + \frac{1}{k_0+2} \, \,  \text{ or } \,\,k_0 - \frac{1}{n - k_0 +2} < \mu_n \leq k_0.
\end{equation*}
Let the sequence $A_{n,k}(1)$ be given. Then $\mu_n(1)= H_n$. Since $H_n$ is unbounded and $H_{n+1} = H_n + \frac{1}{n+1}$,
we can find infinitely many pairs $(n,k_0)$,
such that $H_n \in [k_0, k_0 + \frac{1}{k_0 +2}]$. This implies, that
$\left \{ A_{n,k}\left( 1\right) \right\} _{k}$
has one mode and thus, $\left\{ \stirl{
n}{
k}
\right\} _{k}$ has one mode.

\begin{example}
From Table~\ref{erdos} we see that for
$n=10$, we have $k_{0}=3$ and
$3-\frac{1}{9}<2.9<H_{10}=\mu _{10}\left( 1\right) \leq k_{0}$.
For $n=30$, we have $\mu _{30}=H_{30}\approx 3.995$ and therefore,
$k_{0}=4$ is the unique mode, as
$4-\frac{1}{28}<3.97<\mu _{30}\leq 4$. For $83\leq n\leq 95$, we have
$5.002\approx \mu _{83}=H_{83}\leq \mu _{n}\leq \mu _{95}\approx 5.136$
and therefore, $k_{0}=5$ is the unique
mode as $5\leq \mu _{n}<5.14<5+\frac{1}{7}$.
\end{example}

One can vary the argument and show:
\begin{theorem} Let $n \geq 6$. We consider the sequence $\{A_{n,k}(s)\}_k$ for $s \in (0,1)$.
Let $k_0= \mu_n(s_0)\in \mathbb{N}$ with $s_0 \in (0,1)$. Then there exists
an $\varepsilon >0$, so
that for all
$s \in (s_0 - \varepsilon, s_0 + \varepsilon)$,
the sequences $\{A_{n,k}(s)\}_k$ have exactly one mode.
\end{theorem}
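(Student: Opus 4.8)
The plan is to combine the variant of Darroch's theorem recalled above with the continuity of the map $s\mapsto\mu_n(s)$, exploiting that the hypothesis $\mu_n(s_0)=k_0\in\mathbb{N}$ places us at a point that satisfies \emph{both} one-sided Darroch inequalities at once; a small perturbation of $s$ in either direction then stays inside one of the two intervals, so $k_0$ remains the unique mode throughout $(s_0-\varepsilon,s_0+\varepsilon)$.

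First I would record the relevant facts. For $n\geq 3$ the proof of Theorem~\ref{peak} shows that $\mu_n\colon[0,1]\to[\mu_n(1),\mu_n(0)]=[H_n,\tfrac{n+1}{2}]$ is a strictly decreasing $C^1$-bijection, with $\mu_n'(s)=-\sum_{k=2}^{n-1}\frac{k^s\ln k}{(1+k^s)^2}$. Since $s_0\in(0,1)$ this gives $1<H_n<k_0<\tfrac{n+1}{2}<n$, so $0<k_0<n$ and the quantities $\tfrac{1}{k_0+2}$ and $\tfrac{1}{n-k_0+2}$ are well defined and positive. Moreover each $P_n^{h_s}$ is real-rooted with $P_n^{h_s}(1)>0$, so $\{A_{n,k}(s)\}_k$ is a P\'olya frequency sequence for every $s\in(0,1)$, and $\mu_n(s)=(P_n^{h_s})'(1)/P_n^{h_s}(1)$ is exactly the mean appearing in the Darroch criterion; hence that criterion is applicable for each such $s$.

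Next I would make the perturbation explicit. Put $\delta:=\min\bigl\{\tfrac{1}{k_0+2},\tfrac{1}{n-k_0+2}\bigr\}>0$. Since $\mu_n$ is continuous at $s_0$ with $\mu_n(s_0)=k_0$, there is an $\varepsilon>0$ with $|\mu_n(s)-k_0|<\delta$ for all $s$ with $|s-s_0|<\varepsilon$; one may even take $\varepsilon$ explicitly, using $\tfrac{k^s}{(1+k^s)^2}\le\tfrac14$, which yields $|\mu_n'(s)|\le\tfrac14\ln\bigl((n-1)!\bigr)$ for all $s$ and hence, by the mean value theorem, $|\mu_n(s)-k_0|\le\tfrac14\ln\bigl((n-1)!\bigr)\,|s-s_0|$, so that
\[
\varepsilon:=\min\Bigl\{\frac{4\delta}{\ln\bigl((n-1)!\bigr)},\; s_0,\; 1-s_0\Bigr\}>0
\]
works (the last two entries only keep $s$ inside $(0,1)$, and $\ln((n-1)!)>0$ because $n\geq 6$). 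For such $s$ we have $k_0-\tfrac{1}{n-k_0+2}<\mu_n(s)<k_0+\tfrac{1}{k_0+2}$. If $\mu_n(s)\ge k_0$ then $k_0\le\mu_n(s)<k_0+\tfrac{1}{k_0+2}$; if $\mu_n(s)\le k_0$ then $k_0-\tfrac{1}{n-k_0+2}<\mu_n(s)\le k_0$ (the value $\mu_n(s)=k_0$, which occurs only at $s=s_0$, is covered by either alternative). In every case the hypothesis of the Darroch variant holds, so $\{A_{n,k}(s)\}_k$ has exactly one mode, located at $k_0$; this is the assertion.

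There is no genuine obstacle here: the argument is a soft perturbation via continuity of $\mu_n$. The only point deserving care — and the reason the hypothesis insists that $\mu_n(s_0)$ be an integer — is that $k_0=\mu_n(s_0)$ is simultaneously the left endpoint of $[k_0,k_0+\tfrac{1}{k_0+2})$ and the right endpoint of $(k_0-\tfrac{1}{n-k_0+2},k_0]$ in the Darroch criterion, which is exactly what makes the ``exactly one mode'' conclusion stable as $s$ moves to either side of $s_0$. At a generic $s_0$ the value $\mu_n(s_0)$ is non-integral and may fail both Darroch inequalities, in which case $\{A_{n,k}(s_0)\}_k$ can genuinely have a plateau, so integrality of $\mu_n(s_0)$ is what puts us in the one-mode regime to begin with.
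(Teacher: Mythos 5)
Your proof is correct and follows exactly the route the paper intends: the theorem is stated after the sentence ``One can vary the argument and show,'' referring to the preceding application of the Darroch variant, and your argument is precisely that variation --- integrality of $\mu_n(s_0)=k_0$ puts you at the common endpoint of the two Darroch intervals, and continuity (indeed your explicit Lipschitz bound $|\mu_n'(s)|\le\tfrac14\ln((n-1)!)$) keeps $\mu_n(s)$ inside one of them for $s$ near $s_0$. Your quantitative choice of $\varepsilon$ and the closing remark on why integrality is needed go slightly beyond what the paper records, but the method is the same.
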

%%
%{\bf Acknowledgment.} The authors thank C. Stump for very useful discussions on real-rootness of polynomials and its
%connection to random variables.

%%%%%%%%%%%%%%%%%%%%%%
%%%%%%%%%%%%%%%%%%%%%%
\end{document}